\title{WeylAlgeb}
\newcommand{\di}{\displaystyle}
\newtheorem{theorem}{Theorem}
\newtheorem{corollary}[theorem]{Corollary}
\newtheorem{proposition}[theorem]{Proposition}
\newtheorem{lemma}[theorem]{Lemma}
\newtheorem{example}[theorem]{Example}
\newtheorem{remark}[theorem]{Remark}
\newtheorem{question}[theorem]{Question}
\newcommand{\add}{\operatorname{ad}}
\newcommand{\der}{{\rm Der}}
\newcommand{\Inn}{{\rm Inn}}
\newcommand{\aut}{{\rm Aut}}
\newcommand{\au}{{\rm Aut}}
\title{\textbf{\LARGE On Isotropy Groups of Quantum Weyl Algebra and Jordanian Plane}}
\author{ \small
ADRIANO DE SANTANA, RENE BALTAZAR, ROBSON VINCIGUERRA, WILIAN DE ARAUJO}
\date{\vspace{-5ex}}
\begin{document}
\Large{
\maketitle

\begin{abstract}
We study isotropy groups of $\sigma$-derivations of the quantum Weyl algebra and of ordinary derivations of the Jordanian plane. For the quantum Weyl algebra $A_q^1(\Bbbk)$, with $q$ not a root of unity, we use Brzeziński's classification to decompose every $\sigma$-derivation into inner and non-inner stable components. This yields an intersection formula for the isotropy group of an arbitrary $\sigma$-derivation and leads to explicit arithmetic descriptions. For the Jordanian plane $\Lambda_2(\Bbbk)$, we give a necessary and sufficient condition for an automorphism to belong to the isotropy group of an inner derivation. We compute the isotropy groups of monomial inner derivations and of locally nilpotent derivations. These examples show that isotropy groups in the Jordanian plane may contain
large triangular subgroups, unlike the quantum Weyl algebra. In this way, isotropy groups provide a natural invariant that reflects the structural difference between the Jordanian plane and the quantum Weyl algebra.
\end{abstract}

\section{Introduction}

Let $\Bbbk$ be an algebraically closed field of characteristic zero and let $A$ be a $\Bbbk$-algebra: our results remain valid over an arbitrary field of characteristic zero by
replacing each cyclic group $\mathbb Z_d$ with 
\[
\mu_d(\Bbbk)
=\{\lambda\in\Bbbk^\ast \, \vert \,\lambda^d=1\}.
\]

Given an automorphism $\sigma\in\aut(A)$, we denote by $\der_{\sigma}(A)$ the vector space of all $\sigma$-derivations of $A$, that is, the $\Bbbk$-linear maps $\delta:A\to A$ satisfying
\[
\delta(ab)=\delta(a)b+\sigma(a)\delta(b),
\quad a,b\in A.
\]

The centralizer of $\sigma$ in $\aut(A)$ is
\[
C_{\aut(A)}(\sigma)
=
\{\rho\in\aut(A)\, \vert \,\rho\sigma=\sigma\rho\}.
\]
If $\delta\in\der_{\sigma}(A)$, we define the isotropy group of $\delta$ by
\[
\aut_{\delta}^{\sigma}(A)
=
\{\rho\in C_{\aut(A)}(\sigma)\, \vert \, \rho\delta=\delta\rho\}.
\]
Equivalently,
\[
\aut_{\delta}^{\sigma}(A)
=
\{\rho\in C_{\aut(A)}(\sigma)\, \vert \, \rho\delta\rho^{-1}=\delta\}.
\]

The condition $\rho\sigma=\sigma\rho$ is necessary because, in general, if $\delta\in\der_{\sigma}(A)$ and $\rho\in\aut(A)$, then $\rho\delta\rho^{-1}$ is a $\rho\sigma\rho^{-1}$-derivation. Thus, conjugation preserves $\der_{\sigma}(A)$ precisely for automorphisms belonging to $C_{\aut(A)}(\sigma)$. When $\sigma= \text{id}$, we have $C_{\aut(A)}(\sigma)=\aut(A)$ and $\aut_{\delta}^{\sigma}(A)$ is the usual isotropy group of an ordinary derivation. Research on the isotropy subgroup of commutative $\Bbbk$-algebras has increased significantly in the past years. For example, I. Pan, L. Mendes, D. Levcovitz, L. Bertoncello, R. Baltazar, D. Yan, Y. Huang, M. Veloso, N. Dasgupta, A. Lahiri, A. Rittatore, S. Kour, H. Rewri  (see, \cites{Balta,BaltaVeloso,LevBert,Dasgupta,HuYan,PanBalta,PanMendes,Kour,Rittatore}.

The quantum Weyl algebra is the $\Bbbk$-algebra
\[
A_q^1(\Bbbk)
=
\Bbbk\langle x,y:yx=qxy+1\rangle,
\]
whereas the Jordanian plane is the $\Bbbk$-algebra
\[
\Lambda_2(\Bbbk)
=
\Bbbk\langle x,y:yx=xy+y^2\rangle.
\]

Quantum Weyl algebras and their generalizations have been studied from several points of view, including quantum groups, homological properties, polynomial identities, Hopf actions, automorphisms, and derivations. In addition,  for example, M. Alvarez, Q. Vivas, M. Almulhem, T. Brzezinski and L. Makar-Limanov (\cite{vivas,Limanov,bre}) have contributed significantly to understanding of these structures. The Jordanian plane has also been investigated in connection with automorphisms, derivations, module theory, prime ideals, noncommutative algebraic geometry, Hopf algebras,
and Nichols algebras; see, for example, \cite{Samuel,Shirikov}.

In this paper, we study isotropy groups of $\sigma$-derivations of the quantum Weyl algebra and of ordinary derivations of the Jordanian plane. For the quantum Weyl algebra, with parameter $q$ not a root of unity, we use Brzeziński's classification of $\sigma$-derivations to separate their inner and non-inner components. We determine the isotropy groups of the families occurring in this classification and obtain explicit arithmetic descriptions in terms of roots of unity. We then prove that every $\sigma$-derivation admits a decomposition into $\aut$-stable components and show that its isotropy group is the intersection of the isotropy groups of its nonzero components.

For the Jordanian plane, we first obtain a necessary and sufficient condition for an automorphism to belong to the isotropy group of an arbitrary inner derivation. We apply this criterion to inner derivations induced by monomials
and explicitly determine their isotropy groups. We also compute the isotropy groups of all locally nilpotent derivations, which are precisely the derivations of the form
\[
\delta_p(y)=0,
\qquad
\delta_p(x)=p(y),
\]
where $p(y)\in\Bbbk[y]$.

These computations reveal a sharp contrast between the two algebras. In the quantum Weyl algebra, the isotropy groups are constrained by its one-dimensional torus of automorphisms and are governed by arithmetic conditions. In the Jordanian plane, by contrast, isotropy groups may contain large triangular subgroups. This suggests that the collection of isotropy
groups of derivations can reflect structural properties of the underlying algebra and may be useful in distinguishing noncommutative algebras.

The previous classes of algebras come from the classic result: an Ore extension over a polynomial algebra $\Bbbk[x]$ is either a quantum plane, a quantum Weyl algebra, or an infinite-dimensional unital associative algebra $A_h$ generated by elements $x, y,$ that satisfy $yx-xy = h$, where $h \in \Bbbk[x]$ (for more details, see Lemma 1.2. \cite{Samuel}). This connection, together with the behavior of isotropy groups in the algebras considered in this paper, motivates the following question.

\begin{question}
Can one characterize the isotropy groups of derivations of Ore extensions of $\Bbbk[x]$?
\end{question}

The paper is organized as follows. In Section \ref{2}, we study
$\sigma$-derivations of the quantum Weyl algebra, determine the isotropy groups of the families appearing in Brzeziński's classification, and obtain the intersection formula for an arbitrary $\sigma$-derivation. In Section \ref{3}, we study derivations of the Jordanian plane, with particular emphasis on inner and locally nilpotent derivations, and compare the resulting isotropy groups with those occurring in the quantum and classical Weyl algebras.

\section{Quantum Weyl Algebra}\label{2}

Let $R=A_q^1(\Bbbk)$ be a quantum Weyl algebra where $q\in \Bbbk$, non-zero, is a not root of unity. Let $\sigma$ be an automorphism of $R$: by (\cite{vivas}, Theorem B), we have $\sigma(x)=\mu^{-1}x$ and $\sigma(y)=\mu y$, where $\mu \in \Bbbk^{\ast}$. 

Throughout this section, we fix the automorphism
\[
\sigma=\sigma_{\mu}\in\aut(R),
\quad
\sigma(x)=\mu^{-1}x,
\quad
\sigma(y)=\mu y,
\]
where $\mu\in\Bbbk^*$. Thus, all $\sigma$-derivations considered below are taken with respect to this fixed automorphism $\sigma$. On the other hand, whenever we write $\rho=\rho_{\beta}\in\aut(R)$, we mean an arbitrary automorphism of $R$. Since $q$ is not a root of unity, every automorphism of $R$ is of the form
\[
\rho=\rho_{\beta}\in\aut(R), \quad
\rho_{\beta}(x)=\beta^{-1}x,
\quad
\rho_{\beta}(y)=\beta y,
\]
for some $\beta\in\Bbbk^*$. Since both $\sigma$ and $\rho$ are diagonal automorphisms, they
commute: $\rho\sigma=\sigma\rho$. Consequently, $C_{\aut(R)}(\sigma)=\aut(R)$, and the isotropy group of a $\sigma$-derivation $\delta$ is
\[
\aut_{\delta}^{\sigma}(R)
=
\{\rho\in\aut(R) \, \vert \,\rho\delta=\delta\rho\}.
\]

	\begin{theorem}\label{brez}[\cite{bre},Theorem 6.2]
	 Assume that a non-zero $q \in \Bbbk$ is not a root of unity. Set $h=1-yx\in R$ and let $\mu$ be a non-zero element of $\Bbbk.$
	 \begin{enumerate}[a)]
	 \item\label{a}  For all $f(h) \in \Bbbk[h],$ the map $\delta$ on generators of $R$ given by
$$\delta(x)=f(h)x,\;\;\;\;\delta(y)=\mu^{-1}f(q^{-1}h)y$$
	 extends to a skew derivation $(\delta, \sigma_{\mu} )$ on R. These are the only $\sigma_{\mu}$-derivations such
that $\delta(h) = 0.$ They are inner if and only if there is no $d \in \{0,\ldots,deg(f)\}$ such that $\mu = q^{-d},$ and the coefficient $f_d$ in $\di{f (h) = \sum_ {k} f_{k} h^{k}}$ is not zero.
\item\label{b} If there exists $d \in \mathbb{N}$ such that
$\mu = q^{-d+1},$ then for all $a(x) \in \Bbbk[x]$ and $b(y) \in \Bbbk[y],$ the map given by
$$\delta(x) = h^{d} b(y),\;\;\;
 \delta(y) = h^{d} a(x),$$

extends to a skew derivation $(\delta, \sigma_{\mu})$ on $R$. All these derivations are inner if $d\neq 0,$ and they are not inner if $d = 0.$

\item The (combinations of the) above maps together with the inner-type derivations exhaust all $\sigma_{\mu}$-skew derivations on $R$ contained in (\cite{bre}, Theorem 3.1). Every $\sigma_{\mu}$-skew derivation on $R$ is of this type.

	 \end{enumerate}

	\end{theorem}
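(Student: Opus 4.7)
The plan is to first set up the commutation relations involving $h = 1-yx$ and then exploit the structure of $R$ as a module over $K[h]$. Starting from $yx = pxy + 1$, I would compute $xy = -p^{-1}h$ and deduce commutation rules of the form $xh = \alpha(h)x$ and $yh = \beta(h)y$ for specific polynomials $\alpha, \beta$ (concretely, $\alpha(h) = 1+p^{-1}h$ and $\beta(h) = p(h-1)$), from which $xf(h) = f(\alpha(h))x$ and $yf(h) = f(\beta(h))y$ for any $f \in K[h]$. One also checks $\sigma_\mu(h) = h$ since $\sigma_\mu(yx) = (\mu y)(\mu^{-1}x) = yx$. A PBW-type basis for $R$ of the form $\{h^k : k \geq 0\} \cup \{h^k x^i : k \geq 0,\ i \geq 1\} \cup \{h^k y^j : k \geq 0,\ j \geq 1\}$, obtained by eliminating products $yx$ via $yx = 1-h$, is the bookkeeping tool.

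For part (a), existence is a direct verification: substitute the candidate formulas into the skew-derivation consistency condition $\delta(yx - pxy - 1) = 0$ and reduce both sides to polynomial expressions in $h$ using the commutation rules from the first step. For uniqueness, assume $\delta(h)=0$ and apply $\delta$ to the relation $hx = p(xh - x)$; using $\sigma_\mu(h) = h$, this yields the intertwining constraint $h\delta(x) = p\delta(x)(h-1)$. Expanding $\delta(x)$ in the PBW basis and separating pieces of type $h^k$, $h^k x^i$, and $h^k y^j$, this constraint translates into a family of scalar equations whose solvability (thanks to $p$ not being a root of unity) forces the coefficients of $h^k$ for $k \geq 0$, of $h^k x^i$ for $i \geq 2$, and of $h^k y^j$ for $j \geq 1$ all to vanish, leaving $\delta(x) \in K[h]x$. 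The symmetric argument gives $\delta(y) \in K[h]y$, and the defining relation then rigidly couples the two coefficient polynomials, producing the precise formula of the theorem. For the innerness clause, I would ansatz $r = \sum c_k h^k \in K[h]$, compute the induced inner skew-derivation, and match coefficients; the resulting diagonal linear system is solvable unless $1 - \mu^{-1}p^{-k} = 0$ for some $k$ in the support of $f$.

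Part (b) proceeds analogously: for the distinguished value $\mu = p^{-d+1}$, one verifies directly that the proposed ansatz $\delta(x) = h^d b(y)$, $\delta(y) = h^d a(x)$ is compatible with the defining relation, because the factor $h^d$ absorbs exactly the scalar obstruction that blocks such a formula for generic $\mu$. For $d \geq 1$ one exhibits an explicit $r \in R$ realizing $\delta$ as inner (typically involving $h^{d-1}$ times primitives of $a$ and $b$); for $d = 0$ the non-innerness follows from a degree/filtration argument comparing the image of $\delta$ with the range of $s \mapsto rs - \sigma_\mu(s)r$ for any fixed $r$. Part (c) is an assembly step using (\cite{bre}, Theorem 3.1): from an arbitrary $\sigma_\mu$-skew derivation one first subtracts the unique type-(a) piece restoring $\delta(h)=0$, and the remainder is identified via the classification as a type-(b) plus inner contribution. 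The main obstacle I foresee is the uniqueness step in part (a): extracting $\delta(x) \in K[h]x$ from the single intertwining constraint $h\delta(x) = p\delta(x)(h-1)$ requires careful tracking of how $h$ acts on each PBW basis element, and crucially exploits that $p$ is not a root of unity so that the scalar eigenvalue equations $1-p^{1-i}=0$ and $1-p^{j+1}=0$ have no solutions in the forbidden range of indices.
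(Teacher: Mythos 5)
This statement is quoted verbatim from Almulhem--Brzezinski (\cite{bre}, Theorem 6.2); the paper gives no proof of it, so there is no in-paper argument to compare your proposal against. Judged on its own terms, your sketch is a sound reconstruction along the lines of the original source: the preliminary identities you state are correct ($xy=-p^{-1}h$, $xh=(1+p^{-1}h)x$, $yh=p(h-1)y$, $\sigma_\mu(h)=h$), the decomposition $R=K[h]\oplus\bigoplus_{i\ge1}K[h]x^i\oplus\bigoplus_{j\ge1}K[h]y^j$ is exactly the generalized Weyl algebra structure that \cite{bre} works with, and the uniqueness step in (a) does come down to the eigenvalue equations $1-p^{1-i}=0$ and $1-p^{j+1}=0$ having no solutions outside $i=1$ when $p$ is not a root of unity. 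Two places where your outline is looser than a full proof: the linear system in the innerness criterion of (a) is triangular rather than diagonal (the expansion of $(1+p^{-1}h)^k$ contributes lower-order terms), so the ``inner iff no $d$ with $\mu=p^{-d}$ and $f_d\neq0$'' clause needs the back-substitution argument spelled out at the single degree where the diagonal entry can vanish; and part (c) is not really provable by the subtraction heuristic you describe without importing the classification of \cite{bre}, Theorem 3.1 wholesale, which is what the cited source in fact does. Neither point is a gap in intent, but both would need to be filled in to turn the sketch into a proof.
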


	\begin{proposition}\label{Pro0}
		If $\delta$ is a $\sigma$-derivation as in Theorem \ref{brez}.\ref{a}), then $\delta$ commutes with any automorphism of $R.$ 
	\end{proposition}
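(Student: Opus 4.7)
My approach is to reduce the claim to checking equality on generators and then invoke the uniqueness of $\sigma$-skew derivations. By (\cite{vivas}, Theorem B), every $\rho \in \aut(R)$ has the diagonal form $\rho(x) = \nu^{-1}x$, $\rho(y) = \nu y$ for some $\nu \in K^{\ast}$. Two preliminary observations then do most of the work. First, all such $\rho$ commute with $\sigma = \sigma_{\mu}$ (both live in the one-parameter diagonal family), so $\rho\delta\rho^{-1}$ is again a $\sigma$-skew derivation of $R$. Second, $\rho(h) = 1 - \nu y\cdot \nu^{-1}x = h$, which means that $h$---and therefore every polynomial $f(h) \in K[h]$---is pointwise fixed by every automorphism of $R$.

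Granted these two facts, it suffices to verify that $\rho\delta\rho^{-1}$ agrees with $\delta$ on the generators $x$ and $y$. The direct computation gives
\begin{align*}
(\rho\delta\rho^{-1})(x) &= \rho\bigl(\delta(\nu x)\bigr) = \nu\,\rho\bigl(f(h)x\bigr) = \nu f(h)\nu^{-1}x = f(h)x = \delta(x), \\
(\rho\delta\rho^{-1})(y) &= \rho\bigl(\delta(\nu^{-1}y)\bigr) = \nu^{-1}\rho\bigl(\mu^{-1}f(p^{-1}h)y\bigr) = \mu^{-1}f(p^{-1}h)y = \delta(y),
\end{align*}
where the scaling factors $\nu^{\pm 1}$ cancel precisely because $f(h)$ and $f(p^{-1}h)$ are $\rho$-invariant. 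Since two $\sigma$-skew derivations that coincide on a generating set coincide on all of $R$, we conclude $\rho\delta\rho^{-1} = \delta$, equivalently $\rho\delta = \delta\rho$.

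There is no genuine obstacle here. The only subtle point is that conjugation by $\rho$ preserves the class of $\sigma$-skew derivations, which is automatic from the fact that $\rho$ and $\sigma$ commute; the rest is an invariance computation showing that the central role of $h$ lets the scaling from $\rho$ absorb itself.
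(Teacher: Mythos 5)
Your proof is correct and follows essentially the same route as the paper: both reduce to a computation on the generators $x,y$ using the key observation that $\rho(h)=h$, so the scalars from the diagonal automorphism cancel. The only cosmetic difference is that you phrase it via conjugation $\rho\delta\rho^{-1}=\delta$ and explicitly justify the passage from generators to all of $R$, which the paper leaves implicit.
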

	\begin{proof}
	In fact,   let $\rho$ be a automorphism of $R$, so $\rho(x)=\beta^{-1} x$ and $\rho(y)=\beta y$. Then $\delta(\rho(x))=\delta(\beta^{-1}x)=\beta^{-1}\delta(x)=\beta^{-1}f(h)x$, on the other hand, as $\rho(h)=\rho(1-yx)=1-yx=h$ we have $\rho(\delta(x))=\rho(f(h)x)=\beta^{-1}f(h)x$, showing that $\delta(\sigma(x))=\sigma(\delta(x)).$
 
	Analogously, note that $\delta(\rho(y))=\beta\delta(y)=\beta\mu^{-1}f(q^{-1}h)y$ and
	$\rho(\delta(y))=\rho(\mu^{-1}f(q^{-1}h)y)=\mu^{-1}f(q^{-1}h)\rho(y)=\mu^{-1} \beta f(q^{-1}h)y$. Therefore, $\delta\circ\rho=\rho\circ\delta$.
	\end{proof}
\begin{lemma}\label{lema1}
    Let $\delta$ be a $\sigma$-derivation as in Theorem \ref{brez}.\ref{b}). Then $\delta$ commutes with an automorphism $\rho\neq id$ if, and only if, there is $n\in \mathbb{N}$ such that  $b(y)=\sum_{k} b_{kn+k-1}y^{kn+k-1}$ and $a(x)=\sum_{l} a_{ln+l-1}x^{ln+l-1}$.
\end{lemma}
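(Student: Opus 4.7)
My plan is to reduce the commutation $\delta\rho=\rho\delta$ to coefficient-wise conditions on $a(x)$ and $b(y)$, by evaluating both sides on the generators $x$ and $y$ and using the fact that the elements of $\au(R)$ are classified by the theorem of Alvarez--Vivas as the diagonal maps $\rho(x)=\beta^{-1}x$, $\rho(y)=\beta y$ with $\beta\in K^{\ast}$. Since $\delta$ is a skew derivation with respect to $\sigma_{\mu}$ and $\sigma_{\mu}\rho=\rho\sigma_{\mu}$ (both being diagonal automorphisms), it will suffice to check the identity $\delta\rho=\rho\delta$ on $x$ and $y$. The first preliminary observation is that $\rho(yx)=\beta y\cdot\beta^{-1}x=yx$, hence $\rho(h)=h$ and therefore $\rho(h^{d})=h^{d}$.

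For the forward implication, assume $\rho\neq id$, so $\beta\neq 1$. I would compute on the generator $x$:
\[
\delta(\rho(x))=\beta^{-1}\delta(x)=\beta^{-1}h^{d}b(y),\qquad \rho(\delta(x))=\rho(h^{d}b(y))=h^{d}b(\beta y).
\]
Equating and using that $h^{d}$ is not a zero-divisor (since $R$ is a domain), one gets $\beta^{-1}b(y)=b(\beta y)$; writing $b(y)=\sum_{k}b_{k}y^{k}$ and comparing coefficients yields $b_{k}(\beta^{k+1}-1)=0$ for every $k$. The analogous calculation on $y$ produces $a_{l}(\beta^{l+1}-1)=0$ for every $l$. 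Because $\beta\neq 1$, let $n+1$ be the multiplicative order of $\beta$ (if $\beta$ had infinite order, then $\beta^{k+1}=1$ would force $b_{k}=a_{l}=0$ for all indices, making $\delta=0$, which is excluded from the nontrivial case). Then $b_{k}\neq 0$ forces $k+1=j(n+1)$ for some positive integer $j$, i.e.\ $k=jn+j-1$, and similarly for the coefficients of $a(x)$; reindexing the sum by $j$ (renamed $k$, resp.\ $l$) gives the stated form.

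For the converse, given $n\in\mathbb{N}$ and $b,a$ as in the statement, I would choose $\beta\in K$ a primitive $(n+1)$-th root of unity (assumed to exist in $K$, an implicit hypothesis of the statement) and define $\rho$ by $\rho(x)=\beta^{-1}x$, $\rho(y)=\beta y$; this is an automorphism of $R$ different from the identity by the same classification. Running the two computations above in reverse, every surviving monomial in $b$ and $a$ has exponent of the form $k(n+1)-1$, which satisfies $\beta^{k(n+1)}=1$, so the identities $\beta^{-1}b(y)=b(\beta y)$ and $\beta a(x)=a(\beta^{-1}x)$ hold. Hence $\delta\rho=\rho\delta$ on generators, and extends to all of $R$ by the skew-derivation rule together with $\sigma_{\mu}\rho=\rho\sigma_{\mu}$.

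The only real obstacle is bookkeeping: translating the condition ``$\beta$ has order $n+1$'' into the exponent pattern $kn+k-1$ and making sure the quantifier on $n$ in the statement matches the convention used for $\mathbb{N}$. Everything else is a routine verification carried out separately on the $x$- and $y$-generators, leveraging that $h$ is $\rho$-invariant and that $R$ is a domain so we may cancel $h^{d}$.
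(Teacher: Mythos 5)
Your proof is correct and follows essentially the same route as the paper: evaluate $\delta\rho=\rho\delta$ on the generators, use $\rho(h)=h$ and the diagonal form of the automorphisms to reduce to $\beta^{k+1}=1$ for every exponent $k$ occurring in $b$ (and likewise for $a$), then translate the order of $\beta$ into the exponent pattern $kn+k-1$. You are in fact somewhat more careful than the paper, which does not explicitly carry out the converse direction nor mention the need for a primitive $(n+1)$-th root of unity in $K$.
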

\begin{proof}
    Let $b(y)=\sum b_{i}y^{i}\in \Bbbk[y]$ and $a(x)=\sum a_{j}x^{j}\in \Bbbk[x]$ be such that $\delta(x) = h^{d} b(y),\;\;\;
     \delta(y) = h^{d} a(x).$ Given $\rho$ be a automorphism of $R$, so $\rho(x)=\beta^{-1} x$ and $\rho(y)=\beta y$. We have $\rho(\delta(x))=\rho(h^{d} b(y))=h^{d} b(\rho(y))=h^{d} b(\beta y)$ and $\delta(\rho(x))=\delta(\beta^{-1} x)=\beta^{-1}\delta(x)=\beta^{-1}h^{d} b(y)$, since $\rho$ commute with $\delta$, we obtain $$\beta^{-1}=\beta^{i},$$ for all $i$ such that $b_{i}\neq 0$, i.e., $\beta^{i+1}=1$, for all $i$ such that $b_{i}\neq 0$, and therefore, other $\beta =1$ or there exist $n \in \mathbb{N}$ such that
     $b(y)=\sum_{k} b_{kn+k-1}y^{kn+k-1}$. Similarly, we also have that $\rho(\delta(y))=\rho(h^{d} a(x))=h^{d} a(\rho(x))=h^{d} a(\beta^{-1}x)$  and $\delta(\rho(y))=\delta(\beta y))=\beta \delta(y)=\beta h^{d} a(x)$. Since $\rho(\delta(y))=\delta(\rho(y))$, we must have that $(\beta^{j+1})^{-1}=1$, for all $j$ such that $a_{j}\neq 0$, and consequently, other $\beta =1$ or there exist $n \in \mathbb{N}$ such that
     $a(x)=\sum_{l} a_{ln+l-1}x^{ln+l-1}$.
     \end{proof}
     
\begin{proposition}\label{proposition2}
    Let $\delta$ be a $\sigma$-derivation as in the Lemma \ref{lema1}.  then the isotropy group of $\delta $ is isomorphic to $\mathbb{Z}_{d}$, where $d$ is the greatest commom divisor between all powers of $a(x) $ and $b(y)$  added to 1.
\end{proposition}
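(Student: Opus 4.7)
The plan is to turn the commutativity condition into an explicit arithmetic condition on a single scalar $\beta$ parametrizing automorphisms, and then identify the resulting solution set with $\mathbb{Z}_d$ via the theory of roots of unity. First I would invoke \cite{vivas}, Theorem B, which exhibits every automorphism of $R$ uniquely as $\rho_\beta(x) = \beta^{-1}x$, $\rho_\beta(y) = \beta y$ for some $\beta \in K^{\ast}$, so that $\beta \mapsto \rho_\beta$ is a group isomorphism $K^{\ast} \cong \aut(R)$. Transporting the isotropy subgroup $\au_\delta(R)$ across this isomorphism reduces the problem to understanding the set $S \subseteq K^{\ast}$ of those $\beta$ satisfying $\rho_\beta\,\delta = \delta\,\rho_\beta$.

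Next I would rerun the computation already carried out inside the proof of Lemma \ref{lema1}, but now reading off the precise arithmetic constraint on $\beta$ rather than the shape of $a(x), b(y)$. Equating coefficients in $\rho_\beta \delta(x) = \delta \rho_\beta(x)$ (using that $\rho_\beta(h) = h$) yields $\beta^{i+1} = 1$ for every index $i$ with $b_i \neq 0$, and the analogous identity applied to $y$ gives $\beta^{j+1} = 1$ for every $j$ with $a_j \neq 0$. Using the elementary fact that $\beta^{a} = \beta^{b} = 1$ implies $\beta^{\gcd(a,b)} = 1$, the entire family of constraints collapses to the single equation $\beta^d = 1$, where $d$ is exactly the GCD of all the exponents $i+1$ and $j+1$ described in the statement. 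Thus $S$ equals the group $\mu_d(K)$ of $d$-th roots of unity in $K$.

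Being a finite subgroup of the multiplicative group of a field, $\mu_d(K)$ is automatically cyclic, and the isomorphism $\mu_d(K) \cong \mathbb{Z}_d$ is transported back along $\beta \mapsto \rho_\beta$ to give the claimed description of $\au_\delta(R)$. The one step that genuinely requires care is precisely this identification $\mu_d(K) \cong \mathbb{Z}_d$: it holds with full order $d$ only when $K$ actually contains a primitive $d$-th root of unity (for instance when $K$ is algebraically closed and $\mathrm{char}(K) \nmid d$), so the statement tacitly assumes such a base field. A minor additional point, but not a real obstacle, is the well-definedness of the GCD over a potentially infinite index set; this is immediate because any decreasing sequence of positive integers stabilizes, so the GCD is determined after finitely many terms.
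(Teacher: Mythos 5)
Your proposal is correct and follows essentially the same route as the paper: extract the conditions $\beta^{i+1}=1$ and $\beta^{j+1}=1$ from the computation in Lemma \ref{lema1}, collapse them via B\'ezout to the single equation $\beta^{d}=1$, and identify the resulting group of $d$-th roots of unity in $K$ with $\mathbb{Z}_d$. Your explicit caveat that this last identification has full order $d$ only when $K$ contains a primitive $d$-th root of unity is a hypothesis the paper leaves tacit, so your write-up is, if anything, slightly more careful than the original.
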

\begin{proof}
  By proof of Lemma \ref{lema1}, there is $n\in \mathbb{N}$ such that the powers of $a(x)$ and $b(y)$ are of the form $kn+k-1$. That is, if $j$ is an exponent of some monomial of $a(x)$ or $b(y)$, then $j+1 = k(n+1)$, and consequently,  $n+1$ is a common divisor. Let $d$ be the greatest common divisor between all powers of $a(x) $ and $b(y)$  added to 1, since $\beta^{j+1}=1$, we have $\beta^{d}=1$, and therefore the result follows.
\end{proof}

\begin{example} Consider the following the examples:

\begin{enumerate}
    \item Let $b(y)=y^{5} +y^{11}$ and $a(x)=x^{17}$, and so $d=6$. By Proposition \ref{proposition2}, we have that the isotropy group is isomorphic to $\mathbb{Z}_6$.
    \item Let $b(y)=y^{5} +y^{14}$ and $a(x)=x^{17}$, and so $d=3$. By Proposition \ref{proposition2}, we have that the isotropy group is isomorphic to $\mathbb{Z}_3$.
\end{enumerate}    
\end{example}

\begin{corollary}\label{cor0}
    Let $\delta$ be a $\sigma$-derivation as in Theorem \ref{brez}.\ref{b}). Let $d\in \mathbb{N}$ with $d\neq 1$, then $\sigma$ not in the isotropy group of $\delta.$
	\end{corollary}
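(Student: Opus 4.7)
The plan is to derive a contradiction from assuming $\sigma \in \au_\delta(R)$, using Lemma \ref{lema1} together with the standing hypothesis that $p\in K^{*}$ is not a root of unity. The key observation is that in Theorem \ref{brez}.\ref{b}) the automorphism $\sigma$ is precisely $\sigma_\mu$ with $\mu=p^{-d+1}$, so whenever $d\neq 1$ the scalar $\mu$ is a nontrivial integer power of $p$, and therefore cannot be a root of unity.

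First, I would recall that an arbitrary automorphism $\rho$ of $R$ is of the form $\rho(x)=\beta^{-1}x$, $\rho(y)=\beta y$ for some $\beta\in K^{*}$ (by Theorem B of \cite{vivas}, as used throughout the section), so in particular $\sigma$ corresponds to $\beta=\mu=p^{-d+1}$. Next, I would invoke Lemma \ref{lema1} with $\rho=\sigma$: if $\sigma$ commutes with $\delta$, then $\mu^{i+1}=1$ for every index $i$ with $b_i\neq 0$, and $\mu^{j+1}=1$ for every index $j$ with $a_j\neq 0$. Since $d\neq 1$, the exponent $-d+1$ is a nonzero integer; combined with $p$ not being a root of unity, this shows $\mu$ itself is not a root of unity, so $\mu^{k}\neq 1$ for every integer $k\geq 1$. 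Therefore no exponent can occur in $a(x)$ or $b(y)$, forcing $a(x)=0=b(y)$, which contradicts the non-triviality of the $\sigma$-derivation $\delta$ of the form described in part \ref{b}).

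There is really no serious obstacle here: the corollary is a direct arithmetic consequence of Lemma \ref{lema1} once one unpacks that the specific $\sigma$ in part \ref{b}) is $\sigma_{p^{-d+1}}$. The only minor point to articulate cleanly is the implicit nondegeneracy assumption on $\delta$ (at least one of $a(x)$, $b(y)$ is nonzero), since the zero derivation commutes with every automorphism; this should either be pointed out explicitly in the statement or treated as obvious from the context of part \ref{b}).
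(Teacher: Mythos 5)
Your proof is correct and follows essentially the same route as the paper's: both arguments reduce to observing that $\mu=p^{-d+1}$ cannot be a root of unity when $d\neq 1$ (since $p$ is not), while membership of $\sigma$ in the isotropy group forces $\mu$ to be one. The only cosmetic difference is that you invoke Lemma \ref{lema1} directly, whereas the paper cites Proposition \ref{proposition2} (itself a consequence of that lemma); your explicit remark about the nondegeneracy of $\delta$ is a reasonable clarification that the paper leaves implicit.
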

 \begin{proof}
     Since $\delta $ is as Theorem \ref{brez}.\ref{b}) and $d\neq 1$ we have $\mu=q^{-d+1}$, that is, $\mu$ is not a root of unity, since $q$ is not a root of unity. If $\sigma$ belonged the isotropy groups of $\delta$, by Proposition \ref{proposition2}, $\mu$ is a root of unity, a contradiction.
 \end{proof}


Let $R$ be a quantum Weyl algebra and $\delta$ be a inner type $\sigma$-derivation.  Moreover, consider $\sigma: R \to R$ defined by $\sigma(x) = \mu^{-1}x$ and $\sigma(y) = \mu y$ and, for each $w \in R$, define $$I_w(a)=\add_w(a) = wa - \sigma(a)w,$$ for all $a \in R$. We also denote $w=\sum_{i,j} c_{(i,j)} x^iy^j\in R$ and  $\rho(x) = \beta^{-1}x$ and  $\rho(y) = \beta y$ by an automorphism of $R$.

\begin{theorem} \label{caso3}
   Let $w=\sum_{i}\sum_{j}c_{(i,j)} x^iy^j$ be in $R$. Let $S=\{j-i\in\mathbb Z: c_{(i,j)}\neq 0\}$. Let $\delta=\add_w$ be a $\sigma$-derivation. Then $\delta$ commutes with an automorphism $\rho\neq id$ if, and only if, $\beta^c=1$, where $c=\gcd S$.    
\end{theorem}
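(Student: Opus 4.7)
I would exploit the natural $\mathbb{Z}$-grading of $R$ determined by $\deg(x)=-1$ and $\deg(y)=1$: the defining relation $yx-pxy=1$ is homogeneous of degree $0$, so this grading is well defined and $x^i y^j$ has degree $j-i$. Both $\sigma$ and $\rho$ act diagonally on this grading (by $\mu^n$ and $\beta^n$ respectively on the degree-$n$ component $R_n$), so in particular $\sigma\rho=\rho\sigma$. Using this commutation, a direct calculation from $\delta(a)=wa-\sigma(a)w$ yields
$$\rho\,\delta\,\rho^{-1}(a)=\rho(w)\,a-\sigma(a)\,\rho(w)=\ad(\rho(w))(a),$$
so the condition $\rho\delta=\delta\rho$ is equivalent to $\rho(w)-w\in\ker(\ad)$.

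Decomposing $w=\sum_{n\in\mathbb{Z}}w_n$ into its homogeneous components (so $w_n\neq 0$ precisely when $n\in S$), the identity $\rho(w_n)=\beta^n w_n$ gives $\rho(w)-w=\sum_{n\in S\setminus\{0\}}(\beta^n-1)w_n$. The technical heart of the argument is therefore the following Key Lemma: $\ker(\ad)\cap R_n=\{0\}$ for every $n\neq 0$. Since $\ad(w)$ shifts degree by $\deg w$, the kernel is a graded subspace, so it suffices to treat a single homogeneous element. For $n\geq 1$, I would write $u=\sum_{k\geq 0}\alpha_k x^k y^{k+n}\in R_n$ and impose only the single relation $ux=\mu^{-1}xu$. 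Using the easily-verified identity $y^j x=p^j xy^j+[j]_p y^{j-1}$ (with $[j]_p=1+p+\cdots+p^{j-1}$), a coefficient comparison in the PBW basis produces the boundary equation $\alpha_0[n]_p=0$ together with the recursion $\alpha_\ell[\ell+n]_p=\alpha_{\ell-1}(\mu^{-1}-p^{\ell-1+n})$ for $\ell\geq 1$. Because $p$ is not a root of unity and $n\geq 1$, $[n]_p\neq 0$ forces $\alpha_0=0$, and the recursion then annihilates every $\alpha_\ell$. The case $n\leq -1$ is symmetric via $uy=\mu yu$.

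With the Key Lemma in hand, the condition $\rho(w)-w\in\ker(\ad)$ combined with the graded decomposition forces $(\beta^n-1)w_n=0$ for each $n\neq 0$; since $w_n\neq 0$ exactly when $n\in S$, we obtain $\beta^n=1$ for every $n\in S\setminus\{0\}$, which is equivalent to $\beta^{\gcd S}=1$, i.e.\ $\beta^c=1$. The converse is immediate: if $\beta^c=1$ then $\beta^n=1$ for every $n\in S$, whence $\rho(w)=w$ and $\rho$ commutes with $\delta$. I expect the Key Lemma to be the main obstacle: carrying out the iterated PBW bookkeeping cleanly, isolating the boundary contribution $\alpha_0[n]_p$ from the lower-order tail of $y^j x$, and invoking the non-root-of-unity hypothesis at exactly the right step.
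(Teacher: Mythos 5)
Your proof is correct, and it takes a genuinely different route from the paper's. The paper argues directly on the identities $\delta\rho(x)-\rho\delta(x)=0$ and $\delta\rho(y)-\rho\delta(y)=0$: it groups the monomials of $w$ along the diagonals $s=j-i$, splits $S$ into the set $S''$ of those $s$ whose minimal monomial is a pure power of $x$ and its complement $S'$, expands everything in the PBW basis, and for each $s$ isolates a single monomial (produced by the lowest term $c_{(i_s,s+i_s)}x^{i_s}y^{s+i_s}$) that it checks occurs exactly once, with coefficient $c_{(i_s,s+i_s)}\beta^{\pm1}(p^{\,\cdot}+\cdots+1)(\beta^s-1)$; since $p$ is not a root of unity the $q$-integer is nonzero and $\beta^s=1$ follows. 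Your argument packages the same mechanism conceptually: the conjugation identity $\rho\,\ad(w)\,\rho^{-1}=\ad(\rho(w))$ (valid because $\rho$ and $\sigma$ are simultaneously diagonal, hence commute) reduces the whole question to injectivity of $v\mapsto\ad(v)$ on each graded component $R_n$ with $n\neq 0$, and your boundary equation $\alpha_0(1+p+\cdots+p^{n-1})=0$ together with the upward recursion is precisely the paper's ``lowest monomial'' computation, performed once inside a kernel lemma instead of being re-derived amid the full expansion. I checked the recursion and the symmetric case $n\le -1$; both are right. What your route buys is: no case split $S'$ versus $S''$ and no collision bookkeeping among the five sums of the paper's displays; an explicit converse ($\beta^c=1\Rightarrow\rho(w)=w\Rightarrow\rho\delta=\delta\rho$), which the paper leaves implicit; and a reusable statement ($\ker\ad$ is a graded subspace concentrated in degree $0$) from which Corollary \ref{resumo} is immediate. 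The one step to write out carefully is the gradedness of the kernel, but your justification is sound: for $a\in R_m$ the components $\ad(v_n)(a)\in R_{n+m}$ lie in distinct degrees, so they vanish separately.
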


\begin{proof}
Define $s=j-i$, we can rewrite the polynomial $w$ this way
\begin{equation}
    \label{eq:w em duas}
    w=\sum_{s=-\infty}^\infty\sum_{i=0}^\infty c_{(i,s+i)}x^iy^{s+i}.
\end{equation}

The indices $s$'s for which there are some coefficients $c_{(i,s+i)}\neq 0$ are those such that $s\in S$. Therefore, we can rewrite this sum as
\begin{equation}
    \label{eq:w em duas e k finito}
    w=\sum_{s\in S}\sum_{i=0}^\infty c_{(i,s+i)}x^iy^{s+i}
\end{equation}
Note that, for each $s\in S$, there is a smaller index $i_s$ and a larger index $f_s$ such that $c_{(i,s+i)}\neq 0$ only for $i_s\le i\le f_s$. Therefore, we rewrite the sum as
\begin{equation}
    w=\sum_{s\in S}\sum_{i=i_s}^{f_s} c_{(i,s+i)}x^iy^{s+i}
\end{equation}
As $i_s$ is the smallest index with this property, we have $c_{(i_s,s+i_s)}\neq 0$. Likewise, we have that $c_{(f_s,s+f_s)}\neq 0$.

Let $S''$ be the subset of elements $s\in S$ such that $i_s=-s$ (this means that an element $s\in S''$ if and only if $x^{-s }$ is a monomial of $w$). Let $S'=S\setminus S''$. Thus, we have that $S=S'\cup S''$ is a disjoint union. Then we rewrite the polynomial $w$ by 
\begin{equation}
    \label{eq:w em duas finitos}
    w=\sum_{s\in S'}\sum_{i=i_s}^{f_s} c_{(i,s+i)}x^iy^{s+i}+\sum_{s\in S''}\sum_{i=i_s}^{f_s} c_{(i,s+i)}x^iy^{s+i}
\end{equation}

Note that, if $s\in S''$, then $i_s=-s$. Furthermore, if $s\in S'$, then $i_s>-s$. So, we rewrite Equation \eqref{eq:w em duas finitos} as
\begin{eqnarray}
    \label{eq:w em duas finitos 2}
    w&=&\sum_{s\in S'} c_{(i_s,s+i_s)}x^{i_s}y^{s+i_s}+
        \sum_{s\in S''} c_{(-s,0)}x^{-s}\\ \nonumber
     &&+\sum_{s\in S'}\sum_{i=i_s+1}^{f_s} c_{(i,s+i)}x^iy^{s+i}+
        \sum_{s\in S''}\sum_{i=i_s+1}^{f_s} c_{(i,s+i)}x^iy^{s+i}
\end{eqnarray}
The sum $\sum_{s\in S''} c_{(-s,0)}x^{-s}$ is the only one in the Equation \eqref{eq:w em duas finitos 2} where there are monomials whose exponent of $y$ is zero (i.e., it only depends on $x$).

Now, if $\delta\circ\rho-\rho\circ\delta=0$, then we have that
\begin{eqnarray}
    \label{eq:aplicado a x}
    0&=&\delta\circ\rho(x)-\rho\circ\delta(x)\\
     &=& \sum_{s\in S'} c_{(i_s,s+i_s)}\beta^{-1}(q^{s+i_s}-\mu^{-1})(\beta^s-1)x^{i_s+1}y^{s+i_s}+\nonumber\\
     & & \sum_{s\in S'} c_{(i_s,s+i_s)}\beta^{-1}(q^{i_s-1}+...+1)(\beta^s-1)x^{i_s}y^{s+i_s-1}+\nonumber\\
     & & \sum_{s\in S''}c_{(-s,0)}\beta^{-1}(q^0-\mu^{-1})(\beta^s-1)x^{-s+1}+\nonumber\\
     & & \sum_{s\in S'\cup S''}\sum_{i=i_s+1}^{f_s} c_{(i,s+i)}\beta^{-1}(q^{s+i}-\mu^{-1})(\beta^s-1)x^{i+1}y^{s+i}+\nonumber\\
     & & \sum_{s\in S'\cup S''}\sum_{i=i_s+1}^{f_s}\beta^{-1}(q^{i-1}+...+1)(\beta^s-1)x^{i}y^{s+i-1}\nonumber
\end{eqnarray}

Let $s\in S'$. Note that the only monomial in $x^{i_s}y^{s+i_s-1}$ appears only once in the second sum in the Equation \eqref{eq:aplicado a x} with the coefficient \linebreak $c_{(i_s,s+i_s )}\beta^{-1}(q^{i_s-1}+...+1)(\beta^s-1)$.
In fact, the same monomial cannot appear again in the second sum in the Equation \eqref{eq:aplicado a x}. If $x^{i_s}y^{s+i_s-1}$ appears in the first sum, then there exists $t \in S'$ such that $x^{i_s}y^{s+i_s-1} = x^{i_{t}+1}y^{t+i_{t}}$, but this implies that $s = t$ and $i_s = i_{t}+1$, contradicting the minimality of $i_s$. If $x^{i_s}y^{s+i_s-1}$ appears in any of the other sums, then there exists $t \in S' \cup S''$ and $i_{t} < i \le f_{t}$ such that $t$ and $i$ appear in the exponents of $x$ and $y$ in these sums. In all these cases, we have $s = t$. We have already seen that $t \in S'$ leads to a contradiction due to the minimality of $i_s$. On the other hand, $t \in S''$ leads to a contradiction with $S' \cap S'' = \emptyset$.

By the definition of $i_s$, we have that $c_{(i_s,s+i_s)}\neq 0$. Furthermore $(q^{i_s-1}+...+1)\neq 0$, otherwise $q$ would be a root of unity. Consequently, we have that $\beta^s-1=0$.

On the other hand
\begin{eqnarray}
    \label{eq:aplicado a y}
    0&=&\delta\circ\rho(y)-\rho\circ\delta(y)\\
     &=&\sum_{s\in S'} c_{(i_s,s+i_s)}\beta(1-\mu q^{i_s})(\beta^s-1)x^{i_s}y^{s+i_s+1}+ \nonumber\\
     & &\sum_{s\in S'} c_{(i_s,s+i_s)}\beta(q^{i_s+1}+...+1)(\beta^s-1)x^{i_s-1}y^{s+i_s}+\nonumber\\
     & &\sum_{s\in S''}c_{(-s,0)}\beta(1-\mu q^{-s})(\beta^s-1)x^{-s}y+\nonumber\\
     & &\sum_{s\in S''}c_{(-s,0)}\beta(q^{-s+1}+...+1)(\beta^s-1)x^{-s-1}+\nonumber\\
     & &\sum_{s\in S'\cup S''}\sum_{i=i_s+1}^{f_s} c_{(i,s+i)}\beta(1-\mu q^i)(\beta^s-1)x^{i}y^{s+i+1}+\nonumber\\
     & &\sum_{s\in S'\cup S''}\sum_{i=i_s+1}^{f_s} c_{(i,s+i)}\beta(q^{i+1}+...+1)(\beta^s-1)x^{i-1}y^{s+i}\nonumber
\end{eqnarray}

Let $s\in S''$. Let us note that the only monomial in $x^{-s-1}$ appears only once in the fourth summation with the coefficient \linebreak $c_{(-s,0)}\beta(q^ {-s+1}+...+1)(\beta^s-1)$. By the definition of $i_s$ (which in this case $i_s=-s$), we have that $c_{(-s,0)}\neq 0$. Furthermore $(q^{i_s-1}+...+1)\neq 0$, otherwise $q$ would be a root of unity. Consequently, we have that $\beta^s-1=0$.

We thus conclude that $\beta^s=1$ for all $s\in S'\cup S''=S$, which implies that $\beta^{\,\gcd S}=1$.
\end{proof}

The following result presents an arithmetic characterization for the isotropy group.

\begin{corollary}\label{resumo}
The isotopry group of $\delta=\add_w$ is isomorphic:
\begin{enumerate}[a)]
    \item $\Bbbk^{\ast}$, if we consider $\gcd(0,0)=0$ and $w=\sum_{i}c_{(i,i)} x^iy^i$;
    \item $\mathbb{Z}{_d}$, if $d=\gcd S$ and $d \in \mathbb{N} \setminus \{0,1\}$;
    \item $\{id\}$, if $1=\gcd S$.
\end{enumerate}
\end{corollary}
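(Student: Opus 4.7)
The plan is to derive all three cases as direct consequences of Theorem \ref{caso3}, which already reduces the question of whether an automorphism $\rho$ (with $\rho(x)=\beta^{-1}x$, $\rho(y)=\beta y$) commutes with $\delta=\ad(w)$ to the single arithmetic condition $\beta^s=1$ for every $s\in S$. Since the automorphism group of $R$ is parametrized by $\beta\in K^{\ast}$ via Theorem B of \cite{vivas}, the isotropy group $\au_{\delta}(R)$ is nothing but the subgroup $\{\beta\in K^{\ast}\mid \beta^s=1\text{ for all }s\in S\}$. Each part of the corollary then amounts to identifying this subgroup.

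For part (a), the hypothesis $w=\sum_{i} c_{(i,i)}x^iy^i$ means every nonzero coefficient $c_{(i,j)}$ has $j-i=0$, so $S=\{0\}$ (with the convention $\gcd(0,0)=0$). The defining condition $\beta^0=1$ is satisfied by every $\beta\in K^{\ast}$, hence $\au_{\delta}(R)$ coincides with the entire automorphism group $\cong K^{\ast}$. I should remark here that the calculation in Theorem \ref{caso3} that drove the argument (the monomials $x^{i_s}y^{s+i_s-1}$ and $x^{-s-1}$ that forced $\beta^s=1$) simply does not produce any obstruction when $s=0$, which is consistent with this conclusion.

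For part (b), if $d=\gcd S\in \mathbb{N}\setminus\{0,1\}$, then the simultaneous conditions $\beta^s=1$ for $s\in S$ are equivalent to the single condition $\beta^{d}=1$, since any integer linear combination of elements of $S$ is a multiple of $d$ and $d$ itself is such a combination. Thus $\au_{\delta}(R)$ is the group of $d$-th roots of unity in $K^{\ast}$, which is cyclic of order $d$, i.e.\ isomorphic to $\mathbb{Z}_d$. For part (c), $\gcd S=1$ forces $\beta=1$, giving $\au_{\delta}(R)=\{id\}$.

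The only point that requires more than bookkeeping is confirming that in case (b) we really obtain the full cyclic group of order $d$ rather than a proper subgroup; this is where one uses that $K$ contains all the needed roots of unity (implicit in the standing assumption that $p$, and hence the spectrum of possible $\mu$'s and $\beta$'s, lives in a sufficiently large $K$). Once that is granted, the result follows by the reductions above with no further computation.
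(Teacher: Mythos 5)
Your proof is correct and is essentially the argument the paper intends: the corollary is stated there with no proof at all, as an immediate consequence of Theorem \ref{caso3}, and your identification of the isotropy group with $\{\beta\in K^{\ast}\mid \beta^{\gcd S}=1\}$ is exactly that deduction. Your closing remark is a genuine point the paper glosses over --- in case (b) the group of $d$-th roots of unity in $K^{\ast}$ is cyclic of order $d$ only if $K$ contains a primitive $d$-th root of unity (over $K=\mathbb{Q}$ with $d=3$ it would be trivial), so the statement implicitly assumes $K$ is large enough.
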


\begin{example} Using the notations of Theorem \ref{caso3}, we can easily construct examples that verify the Corollary \ref{resumo}, as follows:
\begin{enumerate}
    \item Let $w=\sum_{i \in L}c_{(i,i)} x^iy^i$ where $L$ is a set of indices. By definition of $S$, we have $S = \{0\}$ and, consequently, $\gcd S = 0$. According to Corollary \ref{resumo}.a), we can conclude that the isotopry group of $\delta=\add_w$ is isomorphic to $\Bbbk^{\ast}$.
    \item We can construct an example reaching Corollary \ref{resumo}.b) by taking $w = xy^4 + x^7 y^4 + x^9 y^6 + x^9 y^{12} + x^5y^{17}$, then $S = \{-3,3,12\}$, and so, $\gcd S = 3$. Applying the Corollary \ref{resumo}.b), the isotopry group of $\delta=\add_w$ is isomorphic to $\mathbb{Z}{_3}$.
    \item Finally, for $w = x^2y^2 + xy^4 + xy^5$, we get $S=\{0,1,4\}$. In this case, $\gcd S = 1$ and, by Corollary \ref{resumo}.b), the isotopry group of $\delta=\add_w$ is isomorphic to $\{id\}$. 
\end{enumerate}
    
\end{example}

We have, so far, determined the isotropy groups of the individual families of $\sigma$-derivations appearing in Theorem \ref{brez}. We combine these results in order to describe the isotropy group of an arbitrary $\sigma$-derivation of the quantum Weyl algebra. Before returning to the quantum Weyl algebra, we state the following general lemma, for an arbitrary $\Bbbk$-algebra $A$. If
\[
G\subseteq C_{\aut(A)}(\sigma)
\]
and $\delta\in\der_\sigma(A)$, we denote by
\[
\aut_\delta^G(A)
=
\{\rho\in G \, \vert \, \rho\delta\rho^{-1}=\delta\}
=
G\cap\aut_\delta^\sigma(A),
\]
the isotropy group of $\delta$ with respect to the conjugation action of $G$.

\begin{lemma}\label{directsum}
Let $G\subseteq C_{\aut(A)}(\sigma)$ be a subgroup acting on $\der_{\sigma}(A)$ by conjugation. Suppose that
$\mathcal D=\mathcal D_1\oplus\cdots\oplus \mathcal D_r
$ is a direct sum of $G$-stable subspaces of $\der_{\sigma}(A)$. Let $\delta=\delta_1+\cdots+\delta_r$, with $\delta_i\in\mathcal D_i$. Then
\[
\aut_{\delta}^{G}(A)
=
\bigcap_{\delta_i\neq0}\aut_{\delta_i}^{G}(A).
\]

\end{lemma}

\begin{proof}
The inclusion
\[
\bigcap_{\delta_i\neq0}\aut_{\delta_i}^{G}(A)
\subseteq
\aut_{\delta}^{G}(A)
\]
is immediate. Conversely, let $\rho\in\aut_{\delta}^{G}(A)$. Since each $\mathcal D_i$ is $G$-stable, we have $\rho\delta_i\rho^{-1}\in\mathcal D_i$
for every $i$. Moreover,
\[
\delta
=
\rho\delta\rho^{-1}
=
\sum_{i=1}^{r}\rho\delta_i\rho^{-1}.
\]

Then, $\sum_{i=1}^{r}
\left(\rho\delta_i\rho^{-1}-\delta_i\right)=0$. The summand $\rho\delta_i\rho^{-1}-\delta_i$ belongs to $\mathcal D_i$. Since the sum is direct, each summand must be zero. Therefore, $\rho\delta_i\rho^{-1}=\delta_i$ for every $i$ such that $\delta_i\neq0$. Thus,
\[
\rho\in
\bigcap_{\delta_i\neq0}\aut_{\delta_i}^{G}(A).
\]
\end{proof}

We recall that for each $w\in R$, define $I_w:R\to R$ by
\[
I_w(a)=wa-\sigma(a)w,
\quad a\in R.
\]

Then, $I_w$ is a $\sigma$-derivation, called the inner $\sigma$-derivation
induced by $w$. We also denote
\[
\Inn_\sigma(R)=\{I_w:w\in R\}
\subseteq\der_\sigma(R)
\]
the subspace of inner $\sigma$-derivations. 

We separate the families appearing in Theorem \ref{brez} into their inner and non-inner components. We begin with the family in Theorem \ref{brez}\ref{a}. Let
\[
f(h)=\sum_{k\geq0}f_kh^k
\]
and denote by $\delta_f$ the corresponding $\sigma$-derivation. By Theorem \ref{brez}\ref{a}, $\delta_f$ fails to be inner precisely when there exists $r\geq0$ such that
$\mu=q^{-r}$ and $f_r\neq0$. For $r\geq0$, let $H_r$ denote the $\sigma$-derivation associated to $f(h)=h^r$,
\[
H_r(x)=h^rx,
\qquad
H_r(y)=\mu^{-1}(q^{-1}h)^ry.
\]
If $\mu=q^{-r}$, then every derivation in the family
\ref{brez}\ref{a} can be written as the sum of an inner
$\sigma$-derivation and a scalar multiple of $H_r$. We, thus, define
\[
\mathcal H_\mu=
\begin{cases}
\Bbbk H_r, & \text{if } \mu=q^{-r}\text{ for some }r\geq0,\\
0, & \text{otherwise.}
\end{cases}
\]

We consider the family in Theorem \ref{brez}\ref{b}. Such a family occurs when $\mu=q^{-d+1}$, for some $d\geq0$. If $d\neq0$, Theorem \ref{brez}\ref{b} shows that every
derivation in this family is inner. Then, this contribution is already contained in $\Inn_\sigma(R)$. The only case in which this family contributes non-inner derivations is $d=0$, equivalently, $\mu=q$. In this case the derivations have the form $\delta(x)=b(y)$, $\delta(y)=a(x)$, where $a(x)\in\Bbbk[x]$ and $b(y)\in\Bbbk[y]$. Thus, we define
\[
\mathcal P_\mu
=
(\oplus_{i\geq0}\Bbbk X_i)
\oplus
(\oplus_{j\geq0}\Bbbk Y_j),
\]
where
\[
X_i(x)=y^i,
\qquad
X_i(y)=0,
\]
and
\[
Y_j(x)=0,
\qquad
Y_j(y)=x^j.
\]

Thus, when $\mu=q$, every derivation in this family is a finite linear combination of the $X_i$ and $Y_j$. If $\mu\neq q$, we denote $\mathcal P_\mu=0$.

\begin{proposition}\label{reduceddecomposition}
Every $\sigma$-derivation of $R$ can be written uniquely in the form
\[
\delta=I_w+\eta+\xi,
\]
where $I_w\in\Inn_\sigma(R)$, $\eta\in\mathcal H_\mu$ and $\xi\in\mathcal P_\mu$. Equivalently,
\[
\der_\sigma(R)
=
\Inn_\sigma(R)
\oplus
\mathcal H_\mu
\oplus
\mathcal P_\mu.
\]

Moreover, each summand is stable under the conjugation action of $\aut(R)$.
\end{proposition}

\begin{proof}
By Theorem \ref{brez}, every $\sigma$-derivation is the sum of an inner $\sigma$-derivation and contributions from the families described in Theorem \ref{brez}\ref{a} and \ref{brez}\ref{b}.

In the family \ref{brez}\ref{a}, all terms are inner except, possibly, the component corresponding to $h^r$ when $\mu=q^{-r}$. This remaining component belongs to $\mathcal H_\mu$. In the family \ref{brez}\ref{b}, all derivations are inner when $d\neq0$, whereas for $d=0$, equivalently $\mu=q$, they form
precisely the space $\mathcal P_\mu$. Then,
\[
\der_\sigma(R)
=
\Inn_\sigma(R)+\mathcal H_\mu+\mathcal P_\mu.
\]

The sum is direct because every nonzero element of $\mathcal H_\mu$ or
$\mathcal P_\mu$ is non-inner. Moreover, $\mathcal H_\mu$ and
$\mathcal P_\mu$ cannot both be nonzero: otherwise,
\[
\mu=q^{-r}=q
\]
for some $r\geq0$, which imply $q^{r+1}=1$, contrary to the assumption that $q$ is not a root of unity. Thus, the
decomposition is unique.

Finally, if $\rho=\rho_\beta\in\aut(R)$, then
\[
\rho I_w\rho^{-1}=I_{\rho(w)},
\]
and so $\Inn_\sigma(R)$ is stable. By Proposition \ref{Pro0},
\[
\rho H_r\rho^{-1}=H_r,
\]
and therefore $\mathcal H_\mu$ is stable. Moreover,
\[
\rho X_i\rho^{-1}
=
\beta^{i+1}X_i,
\quad
\rho Y_j\rho^{-1}
=
\beta^{-(j+1)}Y_j,
\]
which proves that $\mathcal P_\mu$ is also stable.
\end{proof}

Combining Lemma \ref{directsum} with Proposition
\ref{reduceddecomposition}, we can determine the isotropy group of an arbitrary $\sigma$-derivation by intersecting the isotropy groups of its nonzero components, as described in Propositions \ref{Pro0} and \ref{proposition2}, Theorem \ref{caso3}, and Corollary \ref{resumo}.

\begin{corollary}\label{intersection}
Let $\delta=I_w+\eta+\xi\in\der_\sigma(R)$ be the decomposition given by Proposition \ref{reduceddecomposition}, where $I_w\in\Inn_\sigma(R)$, $\eta\in\mathcal H_\mu$ and $\xi\in\mathcal P_\mu$. Then,
\[
\aut_\delta^\sigma(R)
=
\aut_{I_w}^\sigma(R)
\cap
\aut_\eta^\sigma(R)
\cap
\aut_\xi^\sigma(R),
\]
where the zero components are omitted from the intersection. Moreover:
\begin{enumerate}
\item $\aut_\eta^\sigma(R)=\aut(R)$;

\item if $\xi\neq0$, then $\xi$ is of the form
\[
\xi(x)=b(y),
\qquad
\xi(y)=a(x),
\]
and its isotropy group is the one described in Proposition
\ref{proposition2};

\item the isotropy group of the inner component $I_w$ is the one described in
Theorem \ref{caso3} and Corollary \ref{resumo}.
\end{enumerate}

In particular,
\[
\aut_\delta^\sigma(R)
=
\aut_{I_w}^\sigma(R)
\cap
\aut_\xi^\sigma(R).
\]

\end{corollary}

\begin{proof}
By Proposition \ref{reduceddecomposition}, we have the direct sum decomposition
\[
\der_\sigma(R)
=
\Inn_\sigma(R)
\oplus
\mathcal H_\mu
\oplus
\mathcal P_\mu,
\]
whose summands are stable under conjugation by $\aut(R)$. Since
$C_{\aut(R)}(\sigma)=\aut(R)$, Lemma \ref{directsum}, applied to $\delta=I_w+\eta+\xi$, gives
\[
\aut_\delta^\sigma(R)
=
\aut_{I_w}^\sigma(R)
\cap
\aut_\eta^\sigma(R)
\cap
\aut_\xi^\sigma(R),
\]
with the zero components omitted. The descriptions of the three factors follow respectively from Proposition
\ref{Pro0}, Proposition \ref{proposition2}, Theorem \ref{caso3}, and Corollary \ref{resumo}. In particular, Proposition \ref{Pro0} gives $\aut_\eta^\sigma(R)=\aut(R)$.

Therefore, the component $\eta$ imposes no additional restriction, and then
\[
\aut_\delta^\sigma(R)
=
\aut_{I_w}^\sigma(R)
\cap
\aut_\xi^\sigma(R).
\]

\end{proof}

\section{Jordanian Plane}\label{3}
The Jordanian plane is the associative $\Bbbk$-algebra
\[
\Lambda_2(\Bbbk)
=
\Bbbk\langle x,y:yx=xy+y^2\rangle.
\]
Together with the quantum plane, it constitutes one of the fundamental classes of two-generated quadratic algebras. Although these algebras share several structural features, Shirikov proved, among other results, that they are not
isomorphic; see \cite[Theorem 1.4]{Shirikov}. The Jordanian plane has been studied from several points of view, including automorphisms, derivations, modules, prime ideals, noncommutative algebraic geometry, Hopf algebras, and
Nichols algebras; see, for instance, \cite{Samuel}.

In this section, we study isotropy groups of derivations of
$\Lambda_2(\Bbbk)$. After recalling its PBW basis and the descriptions of its automorphisms and derivations, we obtain a criterion for the isotropy of an arbitrary inner derivation and apply it to monomial inner derivations. We also determine the isotropy groups of locally nilpotent derivations, which are precisely the derivations defined by $\delta_p(y)=0$, $\delta_p(x)=p(y)$, and, finally, use these computations to compare the resulting isotropy groups with those arising in the quantum Weyl algebra and in the first Weyl algebra.

\begin{lemma}{(Proposition 1.2,\cite{Shirikov})} \label{basis}
The basis of $\Lambda_2(\Bbbk)$ is $$\{x^iy^j\ | i,j \in \mathbb{N}_0 \}.$$ 

In particular, 
\[
y^mx^n= \sum_{l=0}^{n} {n\choose l}\frac{(m+n-l-1)!}{(m-1)!} x^ly^{m+n-l}; \  m,n \in \mathbb{N},
\]
and $\Lambda_2(\Bbbk)$ is a domain.
\end{lemma}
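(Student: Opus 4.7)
The plan is to realize $\Lambda_2(K)$ as an Ore extension and then read off the basis and domain statements from standard theory. I would first check that $\Lambda_2(K)\cong K[y][x;\delta]$, where $\delta$ is the $K$-linear derivation of the polynomial ring $K[y]$ determined by $\delta(y)=-y^2$ (equivalently $\delta=-y^2\partial_y$). Indeed, the defining relation $yx=xy+y^2$ rearranges to $xy=yx-y^2$, which is the Ore commutation rule $xr=rx+\delta(r)$ with $\sigma=\mathrm{id}$ applied to $r=y$ and extended to all of $K[y]$ by the Leibniz rule. General Ore extension theory then says $K[y][x;\delta]$ is free as a left $K[y]$-module on $\{x^i\}_{i\ge 0}$, so $\{y^j x^i : i,j\ge 0\}$ is a $K$-basis; moreover, since $K[y]$ is a domain and $\sigma=\mathrm{id}$, the Ore extension is itself a domain.

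To convert this into the claimed basis $\{x^iy^j\}$ I would use a degree count. The relation $yx=xy+y^2$ preserves the total degree $i+j$, so if $F_n$ denotes the $K$-span of $\{y^j x^i : i+j\le n\}$, a straightforward induction shows that every $y^j x^i$ with $i+j\le n$ is a $K$-linear combination of monomials $x^a y^b$ with $a+b\le n$. Hence $\{x^a y^b : a+b\le n\}$ spans $F_n$; as this set has the same cardinality $\binom{n+2}{2}$ as the known basis of $F_n$, it must itself be a basis, and letting $n\to\infty$ gives the claim.

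For the explicit formula $y^m x^n=\sum_{l=0}^n \binom{n}{l}\frac{(m+n-l-1)!}{(m-1)!}x^l y^{m+n-l}$, I would first prove the auxiliary identity $y^m x = xy^m + m\, y^{m+1}$ by induction on $m$ (using $yx=xy+y^2$), then induct on $n$. The step $y^m x^n=(y^m x^{n-1})x$ pushes the trailing $x$ past each factor $y^{m+n-1-l}$, and after reindexing the coefficient of $x^l y^{m+n-l}$ becomes $\bigl[\binom{n-1}{l-1}+\binom{n-1}{l}\bigr]\frac{(m+n-l-1)!}{(m-1)!}$, which equals $\binom{n}{l}\frac{(m+n-l-1)!}{(m-1)!}$ by Pascal's rule once one absorbs $(m+n-1-l)\cdot(m+n-l-2)!=(m+n-l-1)!$. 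The main source of friction here is purely clerical: one must keep careful track of the boundary indices $l=0$ and $l=n$ in the two reindexed sums, but conceptually everything reduces to Pascal's rule, so no serious obstacle remains.
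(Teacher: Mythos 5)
Your proof is correct. The paper does not prove this lemma at all --- it is imported verbatim from Shirikov (Proposition 1.2 of \cite{Shirikov}) --- so there is no in-paper argument to compare against; your route (realizing $\Lambda_2(K)$ as the Ore extension $K[y][x;\delta]$ with $\delta=-y^2\partial_y$ to get freeness and the domain property, then using the degree-preserving rewriting plus a dimension count to pass to the basis $\{x^iy^j\}$, and Pascal's rule for the explicit commutation formula) is the standard one and all the steps check out.
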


\begin{lemma}\label{lemad}{(Theorem 4.6,\cite{Shirikov})} If $char(\Bbbk)=0$, then each derivation $d$ of $\Lambda_2(\Bbbk)$ can be represented in
the form
\[
\delta(x) = \alpha y + \psi(x) + \add_w(x),\ \delta(y)= \psi'(x)y + \add_w(y)
\]    
for some $\alpha \in \Bbbk$, $\psi \in \Bbbk[x]$, $w \in  \Lambda_2(\Bbbk)$ and $\add_w(a) = wa - aw$, for $a \in \Lambda_2(\Bbbk)$. 
\end{lemma}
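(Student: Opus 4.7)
The plan is to show every derivation $\delta$ of $\Lambda_2(K)$ lies in the coset $D_{\alpha,\psi} + \{\text{inner derivations}\}$ for some uniquely determined $\alpha \in K$ and $\psi \in K[x]$, where $D_{\alpha,\psi}$ denotes the prescribed map $x \mapsto \alpha y + \psi(x)$, $y \mapsto \psi'(x)\,y$. I would first verify that $D_{\alpha,\psi}$ actually extends to a derivation; the only nontrivial check is compatibility with the defining relation $yx = xy + y^2$, which reduces to the identity $[y,\psi(x)] = y\,\psi'(x)\,y$. For monomials $\psi(x) = x^n$ this follows by a short induction on $n$ starting from $yx - xy = y^2$, and it extends to arbitrary $\psi \in K[x]$ by $K$-linearity.

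Given an arbitrary derivation $\delta$, I would expand $\delta(x) = \sum a_{ij} x^i y^j$ and $\delta(y) = \sum b_{ij} x^i y^j$ in the basis of Lemma~\ref{basis} and apply $\delta$ to $yx - xy - y^2 = 0$ to extract the key constraint
\begin{equation*}
[y,\delta(x)] - [x,\delta(y)] = y\,\delta(y) + \delta(y)\,y.
\end{equation*}
A direct computation using Lemma~\ref{basis} yields $[x^a y^b,\,x] = b\,x^a y^{b+1}$ for $b \geq 1$ (and zero otherwise), and analogously $[x^a y^b,\,y]$ is a nonzero combination of monomials of $y$-degree $\geq 2$ whenever $a \geq 1$. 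Since $\Lambda_2(K)$ is graded with $\deg x = \deg y = 1$ (the defining relation is homogeneous of degree $2$), the inner derivation $\mathrm{ad}\,w$ shifts total degree by $+1$ relative to $w$, and one can select the coefficients of $w$ inductively on total degree so as to simultaneously eliminate every monomial of $\delta(x)$ and of $\delta(y)$ of $y$-degree at least $2$. The hypothesis $\mathrm{char}(K)=0$ enters precisely to divide by the positive integer $b$ at each inductive step. After this reduction we may assume $\delta(x) = \phi_1(x)\,y + \psi(x)$ and $\delta(y) = \phi_2(x)\,y + \rho(x)$ for some $\phi_1,\phi_2,\psi,\rho \in K[x]$.

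The final step is to exploit the constraint equation on the reduced forms to deduce $\rho = 0$, that $\phi_1 = \alpha$ is a scalar, and that $\phi_2 = \psi'$. Substituting into the constraint and projecting onto each basis monomial $x^i y^j$ produces a system of linear relations. Examination of the $y$-coefficient forces the constant term of $\rho$ to vanish; comparison of $x^i y$-coefficients then shows $\rho \equiv 0$; and comparison of $x^i y^2$-coefficients couples $\phi_1$ with the $x$-derivative of $\psi$ via the identity $[y,\psi(x)] = y\,\psi'(x)\,y$ established in the first paragraph, forcing $\phi_1$ to be constant and $\phi_2 = \psi'$. The main obstacle is the bookkeeping: the commutators expand into infinite sums involving the binomial/factorial coefficients of Lemma~\ref{basis}, and one must verify that the cumulative inner-derivation adjustments $\mathrm{ad}\,w$ used at each inductive step assemble into a single, well-defined element $w \in \Lambda_2(K)$. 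Uniqueness of $(\alpha,\psi)$ then follows by passing to the quotient $\Lambda_2(K)/(y) \cong K[x]$, where $\delta$ descends to multiplication-by-$\psi$ on $K[x]$ and $\alpha$ is read off from the coefficient of $y$ in $\delta(x) \bmod y^2$.
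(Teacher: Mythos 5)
The paper itself offers no proof of this lemma --- it is imported verbatim from Shirikov (Theorem 4.6 of \cite{Shirikov}), so there is no internal argument to compare yours against; I can only assess your sketch on its own terms. Your overall strategy (verify that $x\mapsto \alpha y+\psi(x)$, $y\mapsto\psi'(x)y$ respects the relation via $[y,\psi(x)]=y\psi'(x)y$; extract the constraint $[y,\delta(x)]-[x,\delta(y)]=y\delta(y)+\delta(y)y$; normalize by inner derivations; then read off $\rho=0$, $\phi_1=\alpha$, $\phi_2=\psi'$ from the constraint) is the right one, and your endgame is sound: once $\delta(x)=\phi_1(x)y+\psi(x)$ and $\delta(y)=\phi_2(x)y+\rho(x)$, comparing $y$-degrees in the constraint does force $\rho=0$, then $\phi_2=\psi'$, then $\phi_1'=0$. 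The uniqueness argument via $\Lambda_2(K)/(y)\cong K[x]$ also works, since $(y)$ is a two-sided $\delta$-stable ideal and $\mathrm{ad}\,w$ maps $x$ into $(y)$.

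The genuine gap is the middle step, which is the technical heart of the theorem. You assert that one can ``select the coefficients of $w$ inductively on total degree so as to simultaneously eliminate every monomial of $\delta(x)$ and of $\delta(y)$ of $y$-degree at least $2$.'' A dimension count shows this cannot be done by choosing $w$ alone: restricting to the homogeneous component of degree $n$ (the relation is homogeneous, so this is legitimate), the monomials $x^{n-b}y^{b}$ with $0\le b\le n$ give only $n+1$ parameters in $w$, while $\delta(x)$ and $\delta(y)$ each carry $n$ offending coefficients of $y$-degree $\ge 2$, i.e.\ $2n$ targets --- too many for $n\ge 2$. The elimination is only possible because the pair $(\delta(x),\delta(y))$ already satisfies the constraint, and your sketch never explains how the constraint supplies the missing $n-1$ relations. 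Concretely, one must first use $\mathrm{ad}(x^{n-b}y^{b})(x)=b\,x^{n-b}y^{b+1}$, $b\ge 1$, to normalize $\delta(x)$ to $a x^{n+1}+a'x^{n}y$; then, writing $\delta(y)=\sum_j b_j x^{n+1-j}y^{j}$, the coefficient equations extracted from the constraint have the shape $(j-2)b_j=(\text{terms in }a,a',b_{j'},\,j'<j)$, so they determine $b_j$ for $j\ne 2$ but \emph{degenerate} at $j=2$; that loose coefficient must be absorbed by the one remaining inner parameter $\mathrm{ad}(c\,x^{n})$, whose action on $x$ is zero but whose action on $y$ is $-c\,y(nx^{n-1})y$. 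Without spelling out this interplay (and checking the recursion then returns $b_j=0$ for $j\ge 3$), the reduction to the form $\delta(x)=\phi_1(x)y+\psi(x)$, $\delta(y)=\phi_2(x)y+\rho(x)$ is not established, and everything downstream rests on it. Incidentally, you silently and correctly repaired the paper's typo $ad\,w(a)=wa-xa$ (it should read $wa-aw$); that correction deserves to be explicit.
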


\begin{lemma}{(Theorem 3.1,\cite{Shirikov})}
If $char(\Bbbk)=0$ and $\varphi$ is an automorphism of the algebra
$\Lambda_2(\Bbbk)$, then 
\[
\varphi(x) = \gamma x + g(y), \  \varphi(y) = \gamma y,
\]
for some $\gamma \in \Bbbk^{*}$ and $g(y) \in \Bbbk[y]$.
\end{lemma}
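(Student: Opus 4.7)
The plan is to exploit the $x$-filtration on $\Lambda_2(K)$ together with the defining relation $yx-xy=y^2$ to pin down the shape of an arbitrary automorphism $\varphi$.

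First I would introduce the filtration $F_n := \operatorname{span}_K\{x^i y^j : i \le n\}$. Using Lemma~\ref{basis}, one checks that $F_mF_n \subseteq F_{m+n}$ and that the associated graded $\operatorname{gr}^x\Lambda_2(K)$ is the commutative polynomial ring $K[x,y]$, since the defining relation has vanishing top $x$-degree part. Set $a := \deg_x\varphi(x)$ and $b := \deg_x\varphi(y)$. Applying $\varphi$ to the defining relation yields $[\varphi(y),\varphi(x)] = \varphi(y)^2$; the left hand side has $x$-degree at most $a+b-1$ (commutators drop one $x$-degree in the commutative $\operatorname{gr}^x$), while the right hand side has $x$-degree exactly $2b$ (since $\operatorname{gr}^x$ is a domain). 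Hence $2b \le a+b-1$, so $b<a$.

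Next I would refine this to $b=0$, i.e.\ $\varphi(y)\in K[y]$. Since $\varphi^{-1}$ is also an automorphism, the same bound yields $\deg_x\varphi^{-1}(y) < \deg_x\varphi^{-1}(x)$. Writing $\varphi^{-1}(y) = \sum q_{ij}x^iy^j$, the identity $\varphi\circ\varphi^{-1}(y)=y$ becomes $\sum q_{ij}\,\varphi(x)^i\varphi(y)^j = y$; since $\deg_x\varphi(x)^i\varphi(y)^j = ia+jb$ in the $\operatorname{gr}^x$-domain, forcing the maximum of $ia+jb$ to equal $0=\deg_x y$ and combining this with the analogous constraint for $\varphi^{-1}$ forces $b=0$ after a short case analysis. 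This is the most delicate step of the argument and the main obstacle of the proof.

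Once $\varphi(y)=p(y)\in K[y]$, write $\varphi(x) = \sum_{i=0}^a x^i h_i(y)$ with $h_i\in K[y]$ and $h_a\neq 0$. From Lemma~\ref{basis} one computes $[y^k,x^i] = ik\,x^{i-1}y^{k+1} + (\text{lower } x\text{-degree})$, whence $[p(y),x^i] = i\,x^{i-1}y^2 p'(y) + (\text{lower } x\text{-degree})$. Substituting into $[\varphi(y),\varphi(x)] = \varphi(y)^2$ yields $\sum_i [p,x^i]h_i = p^2$. The left hand side has $x$-degree $a-1$ with leading coefficient $a\,y^2 p'(y)h_a(y)$, while the right hand side has $x$-degree $0$; if $a\ge 2$, the leading coefficient is nonzero ($p$ must be non-constant and $h_a\neq 0$), a contradiction. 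Hence $a=1$, and the residual equation $y^2 p'(y)h_1(y) = p(y)^2$ in $K[y]$, together with the requirement $h_1\in K[y]$, shows by a short analysis of the orders at the roots of $p$ (write $p = y^n f$ with $f(0)\ne 0$ and examine the factorization of $nf+yf'$) that $p(y) = cy^n$ is a monomial with $h_1(y) = (c/n)\,y^{n-1}$.

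Finally, to conclude $n=1$ I would use surjectivity of $\varphi$. The projection $\pi\colon \Lambda_2(K)\to K[x]$, $y\mapsto 0$, sends $\varphi(y)=cy^n$ to $0$ and $\varphi(x) = (c/n)\,xy^{n-1}+h_0(y)$ to $h_0(0)\in K$ whenever $n\ge 2$; thus $\pi\circ\varphi$ would have image $K$, contradicting surjectivity of $\pi$. Therefore $n=1$, and setting $\gamma := c\in K^{\ast}$ and $g := h_0\in K[y]$ gives $\varphi(y) = \gamma y$ and $\varphi(x) = \gamma x + g(y)$, which is the asserted form.
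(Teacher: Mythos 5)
The paper offers no proof of this statement---it is imported verbatim from Shirikov (Theorem 3.1 of \cite{Shirikov})---so your argument has to stand entirely on its own. Most of it does. The $x$-filtration is legitimate (the defining relation $yx-xy=y^2$ has lower $x$-degree on the right, so $\operatorname{gr}^x\Lambda_2(K)\cong K[x,y]$ is a commutative domain), and the degree count giving $\deg_x\varphi(y)<\deg_x\varphi(x)$ is correct. Once you have $\varphi(y)=p(y)\in K[y]$, steps 3 and 4 also go through: the equation $y^2p'(y)h_1(y)=p(y)^2$ is right, the multiplicity count does force $p=cy^n$ (each nonzero root of $p$ of multiplicity $m$ is a root of $p'$ of multiplicity exactly $m-1$ in characteristic $0$, hence a root of $h_1$ of multiplicity $m+1$, which is incompatible with $\deg h_1=\deg p-1$ unless $p$ has no nonzero roots), and the projection $\Lambda_2(K)\to\Lambda_2(K)/(y)\cong K[x]$ correctly excludes $n\ge 2$.

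The genuine gap is exactly where you flag the ``main obstacle,'' and it is not a short case analysis. From $\sum q_{ij}\varphi(x)^i\varphi(y)^j=y$ you cannot conclude that the maximum of $ia+jb$ is $0$: distinct pairs $(i,j)$ with the same value of $ia+jb$ can have their leading symbols cancel in $\operatorname{gr}^x\Lambda_2(K)$ (already for $a=2$, $b=1$ nothing a priori prevents $q_{10}\,\sigma(\varphi(x))+q_{02}\,\sigma(\varphi(y))^2=0$, since the symbols $x^2p(y)$ and $x^2q(y)^2$ need not be independent). Controlling such cancellations is the Dixmier-type core of every theorem of this kind and requires a genuine induction, not a case check. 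You can, however, bypass the whole step: the commutator ideal of $\Lambda_2(K)$ is $(y^2)$, hence is preserved by every automorphism; the ideal $(y)=\Lambda_2(K)y=y\Lambda_2(K)$ is completely prime because $\Lambda_2(K)/(y)\cong K[x]$ is a domain, so $\varphi(y)^2\in(y^2)\subseteq(y)$ forces $\varphi(y)=wy$; writing $\varphi^{-1}(y)=w'y$ and composing gives $\varphi(w')w\,y=y$, hence $\varphi(w')w=1$, and since the units of $\Lambda_2(K)$ are $K^{\ast}$ (its graded ring is a polynomial ring) we get $\varphi(y)=\gamma y$ with $\gamma\in K^{\ast}$ directly. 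With that in hand, your computation $[y,\varphi(x)]=\gamma y^2$ immediately yields $\varphi(x)=\gamma x+g(y)$, and your steps 3 and 4 become one line.
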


An immediate consequence of the previous theorem is that 
\[
\aut(\Lambda_2(\Bbbk)) \ {\cong} \ \Bbbk^*\times \Bbbk[y]
\]
with respect to the operation $\circ$ such that: 
\[
(\gamma_2,g_2(y)) \circ (\gamma_1,g_1(y)) = (\gamma_1 \gamma_2, \gamma_1 g_2(y)+ g_1(\gamma_2y)).
\]

Accordingly, we denote by $\varphi=\varphi_{\gamma,g}$ whenever necessary.

\begin{lemma}
    Let $f\in\Lambda_2(\Bbbk)$ be any element. Then, $yf=fy$ if and only if $f\in \Bbbk[y]\cong \Bbbk[y][1;\delta]$. Also, $xf=fx$ if and only if  $f\in \Bbbk[x]\cong \Bbbk[1][x;\delta]$. In particular, $yf=fy$ and $xf=fx$ if and only if $f\in \Bbbk$.
\end{lemma}
\begin{proof}
    Note that it is trivial that if $f\in \Bbbk[y]$, then $yf=fy$. So, let $f=\sum_{ij}c_{ij}x^iy^j$. Then
    \begin{align*}
        yf
            &=\sum_{ij}c_{ij}yx^iy^j\\
            &= \sum_{ij}\sum_{l=0}^ic_{ij}\dfrac{i!}{l!}x^iy^{i+1-l+j}\\
            &= \sum_{ij}\sum_{l=0}^{i-1}c_{ij}\dfrac{i!}{l!}x^iy^{i+1-l+j}+fy.\\
    \end{align*}

    Let's denote $k=j-l$. Thus $j=k+l$ and $l=j-k$. As $0\le l\le i-1$, we obtain $j-i+1\le k\le j$. Thus, the previous expression becomes
    $$yf-fy=\sum_{ij}\sum_{k=j-i+1}^jc_{ij}\dfrac{i!}{(j-k)!}x^iy^{i+k+1}.$$
    
Note that the monomial $x^iy^{i+k+1}$ only appears once in the above expression and has coefficient $c_{ij}\dfrac{i!}{(j-k)!}$. Furthermore, note that the second summation has terms only for $i>0$. In this case, since $yf-fy=0$, then $c_{ij}=0$ for all $i>0$, which implies that $f\in k[y]$.

For the second statement, note that if $f\in \Bbbk[x]$, then $xf=fx$. So again let $f=\sum_{ij}c_{ij}x^iy^j$. Then
    \begin{align*}
        fx&=\sum_{ij}c_{ij}x^iy^jx\\
          &=\sum_{ij}c_{ij}x^i(jy^{j+1}+xy^j)\\
          &=\sum_{ij}c_{ij}x^ijy^{j+1}+xf.
    \end{align*}
That is, if $fx=xf$, then $c_{ij}=0$ for all $j>0$, that is, $f\in \Bbbk[x]$. Finally, the last statement follows from the previous two.
\end{proof}

\begin{proposition} \label{inner} Let $\delta= \add_w \in \der(\Lambda_2(\Bbbk))$ be an inner derivation with $w \in \Lambda_2(\Bbbk)$ and $\rho \in \au(\Lambda_2(\Bbbk))$. Then, $\rho \in \au_\delta(\Lambda_2(\Bbbk))$ if and only if $\rho(w)-w \in \Bbbk.$
\end{proposition}
\begin{proof}
    Using the notation of the previous theorem: $$\rho(x) = \gamma x + g(y), \  \rho(y) = \gamma y,$$ with $\gamma \in \Bbbk^{*}$ and $g(y) \in \Bbbk[y]$.      Additionally, if $\rho \in \au_\delta(\Lambda_2(\Bbbk))$, the isotropy group, then 
\begin{equation}\label{eq1}
\rho(\delta(x))=\delta(\rho(x)),
\end{equation}

\begin{equation}\label{eq2}
\rho(\delta(y))=\delta(\rho(y)).
\end{equation}

From the equation \eqref{eq2}, we have $(\rho(w)-w)y=y(\rho(w)-w)$ and then, by the previous lemma, $\rho(w)-w \in \Bbbk[y].$ From the equation \eqref{eq1}, we have $\gamma(\rho(w)-w)x+(\rho(w)-w)g(y)=\gamma x(\rho(w)-w)+g(y)(\rho(w)-w)$ and thus, by the previous lemma, $\rho(w)-w \in \Bbbk[x].$ Therefore, $\rho(w)-w \in \Bbbk$.
\end{proof}

\begin{remark} Let $h(x) \in \Bbbk[x] \setminus \Bbbk$. The algebra $A_h$ is the unital associative algebra over $\Bbbk$ with generators $x,y$ defining relation $yx-xy=h(x)$. Note that, up to isomorphism, the Jordanian plane is a class of these algebras. Kaygorodov, Lopes, and Mashurov (Proposition 2., \cite{ISF2021}) showed that $\delta \in \der(A_h)$ is locally nilpotent if and only if there exists $p(y) \in \Bbbk[y]$ such that $\delta(x)=p(y)$ and $\delta(y)=0$. Notice that a derivation of this form can be described as the Lemma \ref{lemad}. Indeed, denote $p(y)=\sum_{j=0}^rp_jy^j$ and let $w=\sum_{j=1} \frac{p_{j+1}}{j}y^j$, $\psi(x)=p_0$ and $\alpha=p_1$, thus:
\[
\delta^*(x) = \alpha y + \psi(x) + \add \ w(x),\ \delta^*(y)= \psi'(x)y + \add \ w(y)
\]    

Note that $\delta^*(y)=0$. We obtain $\delta^*(x)=p(y)$: in fact, $$\add_w(x)=\sum_{j=1}\frac{p_{j+1}}{j}(y^jx-xy^j)=$$$$=\sum_{j=1}\frac{p_{j+1}}{j}(jy^{j+1}+xy^j-xy^j)=\sum_{j=1} p_{j+1}y^{j+1}.$$

Therefore, $\delta^*(x)=p_1y+p_0+\sum_{j=1} p_{j+1}y^{j+1}=p(y).$
\end{remark}

\begin{proposition}\label{lnd}
Let $p(y)\in\Bbbk[y]$ and let $\delta_p$ be the locally nilpotent derivation of $\Lambda_2(\Bbbk)$ defined by $\delta_p(y)=0$ and
$\delta_p(x)=p(y)$. Then,
\[
\aut_{\delta_p}(\Lambda_2(\Bbbk))
=
\{\varphi_{\gamma,g}\in\aut(\Lambda_2(\Bbbk)) \, \vert \,
p(\gamma y)=\gamma p(y)\}.
\]
\end{proposition}

\begin{proof}
Let $\varphi_{\gamma,g}(x)=\gamma x+g(y)$, $\varphi_{\gamma,g}(y)=\gamma y$ be an automorphism of $\Lambda_2(\Bbbk)$. Since $\delta_p(y)=0$, we have $\varphi_{\gamma,g}(\delta_p(y))=0=
\delta_p(\varphi_{\gamma,g}(y))$. On the other hand,
\[
\varphi_{\gamma,g}(\delta_p(x))
=
\varphi_{\gamma,g}(p(y))
=
p(\gamma y),
\]
and
\[
\delta_p(\varphi_{\gamma,g}(x))
=
\delta_p(\gamma x+g(y))
=
\gamma p(y),
\]
since $\delta_p(g(y))=0$. Then, $\varphi_{\gamma,g}\delta_p=\delta_p\varphi_{\gamma,g}$ if and only if $p(\gamma y)=\gamma p(y)$. 
\end{proof}

\begin{proposition}\label{monomial}
Let $w=x^my^n$, with $m,n\geq0$ and let $\delta=\add_w$. Then, the following hold:

\begin{enumerate}
\item If $m=n=0$, then $\delta=0$ and
\[
\aut_\delta(\Lambda_2(\Bbbk))=\aut(\Lambda_2(\Bbbk)).
\]

\item If $m=0$ and $n>0$, then
\[
\aut_\delta(\Lambda_2(\Bbbk))
=
\{\varphi_{\gamma,g} \, \vert \, \gamma^n=1,\ g(y)\in\Bbbk[y]\}.
\]

\item If $m=1$ and $n=0$, then
\[
\aut_\delta(\Lambda_2(\Bbbk))
=
\{\varphi_{1,c}\, \vert \,c\in\Bbbk\}.
\]

\item If $m\geq1$ and $(m,n)\neq(1,0)$, then
\[
\aut_\delta(\Lambda_2(\Bbbk))
=
\{\varphi_{\gamma,0}\, \vert \,\gamma^{m+n}=1\}.
\]
\end{enumerate}
\end{proposition}

\begin{proof}
Due to Proposition \ref{inner}, we have $\varphi_{\gamma,g}\in\aut_\delta(\Lambda_2(\Bbbk))$
if and only if $\varphi_{\gamma,g}(w)-w\in\Bbbk$. If $m=n=0$, then $w=1$ and $\delta=\add_1=0$. Therefore, $\aut_\delta(\Lambda_2(\Bbbk))=\aut(\Lambda_2(\Bbbk))$. Suppose that $m=0$ and $n>0$. Then $w=y^n$ and $\varphi_{\gamma,g}(w)-w
=(\gamma^n-1)y^n$. This element belongs to $\Bbbk$ if and only if $\gamma^n=1$. Since no condition is imposed on $g(y)$, we obtain
\[
\aut_\delta(\Lambda_2(\Bbbk))
=\{\varphi_{\gamma,g} \, \vert \,\gamma^n=1,\ g(y)\in\Bbbk[y]\}.
\]

Suppose now that $m=1$ and $n=0$. Since $w=x$, we have
\[
\varphi_{\gamma,g}(w)-w
=
(\gamma-1)x+g(y).
\]
By the PBW basis, this element belongs to $\Bbbk$ if and only if $\gamma=1$ and $g(y)=c\in\Bbbk$. Then,
\[
\aut_\delta(\Lambda_2(\Bbbk))
=\{\varphi_{1,c}\, \vert \,c\in\Bbbk\}.
\]

It remains to consider $m\geq1$ and $(m,n)\neq(1,0)$. First, let $m=1$ and $n>0$. Then, $\varphi_{\gamma,g}(xy^n)-xy^n
=(\gamma^{n+1}-1)xy^n+\gamma^n g(y)y^n$. By the PBW basis, this element is scalar if and only if $\gamma^{n+1}=1$
and $g(y)=0$.

Assume now that $m\geq2$. Consider the filtration of
$\Lambda_2(\Bbbk)$ by $x$-degree,
\[
F_d=\bigoplus_{i=0}^{d}x^i\Bbbk[y].
\]

The defining relation $yx=xy+y^2$ implies that, for every $r\geq1$, $g(y)x^r-x^rg(y)\in F_{r-1}$. Therefore,
\[
(\gamma x+g(y))^m
=
\gamma^m x^m
+
m\gamma^{m-1}x^{m-1}g(y)
+
u,
\]
for some $u\in F_{m-2}$. It follows that
\[
\begin{aligned}
\varphi_{\gamma,g}(x^my^n)-x^my^n
&=
(\gamma^{m+n}-1)x^my^n \\
&\quad
+
m\gamma^{m+n-1}x^{m-1}g(y)y^n
+
\gamma^nuy^n,
\end{aligned}
\]
where $uy^n\in F_{m-2}$. Suppose that $\varphi_{\gamma,g}(x^my^n)-x^my^n\in\Bbbk$. Since $m\geq2$, comparison in $F_m/F_{m-1}$ gives $\gamma^{m+n}=1$. Comparison in $F_{m-1}/F_{m-2}$ then gives
\[
m\gamma^{m+n-1}g(y)y^n=0,
\]
it follows that $g(y)=0$.

Conversely, if $g(y)=0$ and $\gamma^{m+n}=1$, then $\varphi_{\gamma,0}(x^my^n)
=\gamma^{m+n}x^my^n=x^my^n$. Thus, $\varphi_{\gamma,0}\in
\aut_{\delta}(\Lambda_2(\Bbbk))$. Therefore,
\[
\aut_\delta(\Lambda_2(\Bbbk))
=
\{\varphi_{\gamma,0} \, \vert \,\gamma^{m+n}=1\}.
\]
\end{proof}

\begin{corollary}\label{cor}
Let $m\geq1$ and $n\geq0$, with $(m,n)\neq(1,0)$. Then
\[
\aut_{\add_{x^my^n}}(\Lambda_2(\Bbbk))
\cong
\mathbb Z_{m+n}.
\]
\end{corollary}




If two $\Bbbk$-algebras are isomorphic then there is a natural correspondence between their isotropy groups. More precisely, let $\rho: A \to B$ be an isomorphism of $\Bbbk$-algebras and $\delta:A \to A$ be a derivation then $\au_\delta \cong  \au_{\delta^*}$, where $\delta^*=\rho \circ \delta \circ \rho^{-1}$. Consequently, the collection of isotropy groups arising from derivations is an invariant of the algebra. In particular, if one algebra admits a derivation whose isotropy group has a feature that cannot occur as the isotropy group of any derivation of another algebra, then the two algebras cannot be isomorphic. Although the non-isomorphism considered below is classical, this point of view suggests isotropy groups as a possible strategy for distinguishing algebras.

\begin{corollary}\label{cor} $\Lambda_2(\Bbbk) \not\cong A^{1}_q(\Bbbk)$.

\end{corollary}

\begin{proof} 
Consider the locally nilpotent derivation
\[
\delta_1(y)=0,
\quad
\delta_1(x)=1
\]
of the Jordanian plane $\Lambda_2(\Bbbk)$. By Proposition
\ref{lnd},
\[
\aut_{\delta_1}(\Lambda_2(\Bbbk))
=
\{\varphi_{1,g} \, \vert \, g(y)\in\Bbbk[y]\}
\cong(\Bbbk[y],+).
\]

In particular, this isotropy group contains an infinite-dimensional triangular subgroup. On the other hand, the isotropy group of any ordinary derivation of $A_q^1(\Bbbk)$ is contained in $\aut(A_q^1(\Bbbk))\cong\Bbbk^*$. Thus, every such isotropy group is contained in a one-dimensional torus. This would identify the infinite-dimensional triangular isotropy group $\{\varphi_{1,g}\, \vert \,g(y)\in\Bbbk[y]\}$ with an isotropy subgroup contained in the one-dimensional torus $\Bbbk^*$, which is impossible in the natural ind-group structure on automorphism groups. Therefore,
\[
\Lambda_2(\Bbbk)\not\cong A_q^1(\Bbbk).
\]
\end{proof}

\begin{remark} Let $A_1:= \langle x, y \ \vert \ yx-xy=1 \rangle $ be the first Weyl algebra. If $\Bbbk$ is a field of characteristic zero, J. Dixmier (\cite{Dixmier}) proved that the group $\aut(A_1)$ is generated by its subgroups ${\rm Aff}(A_1)$, affine automorphisms, and ${\rm U}(A_1):=\{\phi_f: x \to x, \ y \to y+f \,\,\vert \, f\in \Bbbk[x]\}$, triangular automorphisms. In case of characteristic $p>0$, Makar-Limanov (\cite{Limanov}) proved that $\aut(A_1)$ and $\Gamma:= \{ \tau \in \aut(\Bbbk[x,y]) \ \vert \ \mathcal{J} (\tau)=1 \}$ are isomorphic as abstract groups in which $\mathcal{J} (\tau)$ is the Jacobian of $\tau$ and also presented a new proof of the case of characteristic zero. Furthermore, the derivations of $A_1$ are all inner when $char(\Bbbk)=0$. 
\end{remark}

The behavior described above should be contrasted with the classical first Weyl algebra. While the quantum Weyl algebra $A_q^1(\Bbbk)$, for $q$ not a root of
unity, has only diagonal automorphisms, the first Weyl algebra $A_1$ admits a much larger automorphism group. This additional flexibility allows isotropy
groups of derivations to contain large triangular subgroups: the next example shows this already for a very simple inner derivation.

\begin{example}
    Let $\delta^*=\add_x \in Der(A_1)$, $\rho:=(ax+by, cx+dy)$ an affine automorphism with $ad-bc = 1$ and $\varphi:=(x, y+f(x)) \in {\rm U}(A_1)$ a triangular automorphism. We also suppose that $\rho, \varphi \in \au_{\delta^*}(A_1)$, so: 
\begin{equation}\label{eq:isotropy-affine}
\delta^*(\rho(x))=\rho(\delta^*(x))
\quad\text{and}\quad
\delta^*(\rho(y))=\rho(\delta^*(y)),
\end{equation}

\begin{equation}\label{eq:isotropy-triangular}
\delta^*(\varphi(x))=\varphi(\delta^*(x))
\quad\text{and}\quad
\delta^*(\varphi(y))=\varphi(\delta^*(y)).
\end{equation}

From equation \eqref{eq:isotropy-affine}, 
\[
(ax+by)(ax+by)-(ax+by)(ax+by)=a(xx-xx)+b(xy-yx),
\]
thus, $b=0$. In addition, 
\[
(ax+by)(cx+dy)-(cx+dy)(ax+by)=c(xx-xx)+d(xy-yx),
\]
thus, $ad(xy-yx)=-d$. Note that $d \neq 0$, since $b=0$, and then $a=1$. Therefore, $\au_{\delta^*}(A_1)$ contains an infinite subgroup of the form 
\[
\{(x, cx+dy) \, \vert \, \ d = 1, \ c \in \Bbbk\} \cong (\Bbbk, +).
\]

From equation \eqref{eq:isotropy-triangular}, 
\[
\delta^*(\varphi(x))=\delta^*(x)=0=\varphi(\delta^*(x)),
\]

In addition, 
\[
\varphi(\delta^*(y))=-1=\delta^*(\varphi(y))=
\]
\[
=\delta^*(y+f(x))=xy+xf(x)-yx-f(x)x,
\]

Therefore, $\au_{\delta^*}(A_1)$ contains an infinite non-algebraic subgroup of the form $\{(x, y+f(x)) \vert f(x) \in \Bbbk[x]\}$: more precisely, automorphisms that preserve the pencil of lines $x=constant$. 

\end{example}

\bibliographystyle{abbrv}

\bibliography{refsss}

\bigskip

\vspace{5mm}
\small

\ttfamily ADRIANO DE SANTANA, UNIVERSIDADE TECNOLÓGICA FEDERAL DO PARANÁ, UTFPR TOLEDO/PR, BRASIL.

\textit{E-mail address:} adrianosantana@utfpr.edu.br 
\vspace{3mm}

\ttfamily RENE BALTAZAR, UNIVERSIDADE FEDERAL DO RIO GRANDE, FURG, SANTO ANTÔNIO DA PATRULHA/RS, BRASIL.

\textit{E-mail address:} renebaltazar.furg@gmail.com

\vspace{3mm}
\ttfamily ROBSON VINCIGUERRA, UNIVERSIDADE TECNOLÓGICA FEDERAL DO PARANÁ, UTFPR TOLEDO/PR, BRASIL.

\textit{E-mail address:} robsonwv@gmail.com
\vspace{3mm}

\ttfamily WILIAN DE ARAUJO, UNIVERSIDADE TECNOLÓGICA FEDERAL DO PARANÁ, UTFPR TOLEDO/PR, BRASIL.

\textit{E-mail address:} wilianmat@yahoo.com.br

\end{document}